\newtheorem{thm}{Theorem}[section]
\newtheorem{lem}{Lemma}[section]
\theoremstyle{definition}
\newtheorem{defn}{Definition}[section]
\theoremstyle{remark}
\newtheorem{rem}{Remark}[section]
\numberwithin{equation}{section}
\begin{document}

\title[Quasi-nearly subharmonic functions and quasiconformal mappings]{Quasi-nearly subharmonic functions and quasiconformal mappings}%
\author{Pekka Koskela and Vesna Manojlovi\'c}%
\thanks{The first author was supported by grants from the Academy of Finland and
the second author by MN Project 174024, Serbia.}
\address{Department of Mathematics and Statistics, P.O. Box 35, FIN-40014,
University of Jyv\"askyl\"a, Finland}%
\email{\tt pkoskela@maths.jyu.fi}%
\address{University of Belgrade, Faculty of Organizational Sciences,
Jove Ilica 154, Belgrade, Serbia}
\email{\tt vesnam@fon.rs}%
\thanks{}%
\subjclass{31C05,30C65}%
\keywords{}%

\maketitle

{\it $\ \ \ \ \ \ \ \ \ \ \ \ \ \ \ \ \ \ \ $ Dedicated to Professor Miroslav Pavlovic}

\begin{abstract}
We prove that the composition of a quasi-nearly subharmonic function and a
quasiregular mappings of bounded multiplicity is
quasi-nearly subharmonic.
Also, we prove that if $u\circ f$ is quasi-nearly subharmonic for all
quasi-nearly subharmonic $u$ and $f$
satisfies some additional conditions, then $f$ is quasiconformal.
Similar results are further established for the class of regularly
oscillating functions.
\end{abstract}


\section{Introduction and results}
Let $\Omega$ be a domain in the Euclidean space $\mathbf R^n$.
If $h$ is a function
harmonic in $\Omega$, then the function $|h|^p,$ which need not be
subharmonic in $\Omega$ for $0<p<1,$ behaves like a subharmonic function:
the inequality
\begin{equation}\label{h}
    |h(a)|^p\le \frac C{r^n}\int_{B(a,r)}|h|^p\, dm, \quad {B(a,r)}\subset
\Omega, \ 0<p<\infty,
\end{equation}
holds, whenever $0<p<\infty,$ $B(a,r)=\{x:|x-a|<r\}\subset \Omega$,
and $dm$ is the Lebesgue measure normalized so that $|B(0,1)|:=m(B(0,1))=1.$
The constant $C$ in \eqref{h} depends only on $n$ and $p$ when $p<1$,
and $C=1$ when $p\ge1.$ This fact is essentially due to Hardy and
Littlewood (see \cite[Theorem 5]{HL}), although they never formulated it.
The proof was first given by Fefferman and Stein \cite{FS},
and independently by Kuran \cite{KU}. It follows from Fefferman and Stein's
proof that \eqref{h} remains true if $|h|$ is replaced by a
nonnegative subharmonic function.
Hence:
 \begin{thm}\label{fs} If $u\ge0$ is a function subharmonic in a domain
$\Omega\subset \mathbf R^n,$ then
 \begin{equation}\label{uu}
    u(a)^p\le \frac C{r^n}\int_{B(a,r)}u^p\, dm, \quad B(a,r)\subset \Omega,
\ 0<p<\infty,
\end{equation}
where $C$ depends only on $p$ and $n$, when $p<1$, and $C=1$ when $p\ge 1.$
 \end{thm}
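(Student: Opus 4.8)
The plan is to handle the ranges $p\ge 1$ and $0<p<1$ separately. For $p\ge 1$ the inequality is elementary: the sub-mean value property of subharmonic functions gives $u(a)\le r^{-n}\int_{B(a,r)}u\,dm$ (recall that $m(B(a,r))=r^{n}$ with the present normalisation), and since $t\mapsto t^{p}$ is convex on $[0,\infty)$, Jensen's inequality for the probability measure $r^{-n}\,dm$ on $B(a,r)$ yields $u(a)^{p}\le\bigl(r^{-n}\int_{B(a,r)}u\,dm\bigr)^{p}\le r^{-n}\int_{B(a,r)}u^{p}\,dm$, which is \eqref{uu} with $C=1$.

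Now assume $0<p<1$. After a translation and a dilation we may take $a=0$ and $r=1$, and we may assume $A:=\int_{B(0,1)}u^{p}\,dm<\infty$, since otherwise there is nothing to prove. Set $\phi(t)=\max_{\overline{B(0,t)}}u$ for $0<t<1$; this maximum is finite and attained because $u$ is upper semicontinuous and $\overline{B(0,t)}$ is a compact subset of the domain of $u$, the function $\phi$ is non-decreasing, and $u(0)\le\phi(t)$ for every such $t$. The heart of the argument is the recursive estimate
\[
\phi(\rho)\le\frac{2A}{(s-\rho)^{n}}\,\phi(s)^{1-p},\qquad 0<\rho<s<1 .
\]
To obtain it, choose $x_{0}\in\overline{B(0,\rho)}$ with $u(x_{0})=\phi(\rho)$, apply the sub-mean value inequality on the ball $B(x_{0},s-\rho)\subset B(0,s)$, and split the resulting average according to whether $u\le\phi(\rho)/2$ or not: the contribution of $\{u\le\phi(\rho)/2\}$ is at most $\phi(\rho)/2$ and is absorbed into the left-hand side, while on $\{u>\phi(\rho)/2\}$ one estimates $u=u^{p}u^{1-p}\le\phi(s)^{1-p}u^{p}$, using $0\le u\le\phi(s)$ there, and then enlarges the domain of integration to $B(0,1)$.

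Finally I would iterate this estimate along the radii $\rho_{k}=1-2^{-k-1}$, for which $\rho_{k+1}-\rho_{k}=2^{-k-2}$, getting $\phi(\rho_{k})\le C_{1}2^{kn}A\,\phi(\rho_{k+1})^{1-p}$ with $C_{1}=2^{2n+1}$. Unwinding the recursion over $k=0,\dots,N-1$ gives $\phi(\rho_{0})\le\bigl(\prod_{k=0}^{N-1}(C_{1}2^{kn}A)^{(1-p)^{k}}\bigr)\phi(\rho_{N})^{(1-p)^{N}}$; letting $N\to\infty$, the convergent series $\sum_{k\ge0}(1-p)^{k}=1/p$ and $\sum_{k\ge0}k(1-p)^{k}=(1-p)/p^{2}$ control the exponents of $A$ and of $2^{n}$, while the residual factor $\phi(\rho_{N})^{(1-p)^{N}}$ tends to $1$ once $\phi(1^{-})<\infty$. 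This yields $u(0)\le\phi(\rho_{0})\le C(n,p)\,A^{1/p}$, i.e.\ \eqref{uu} after undoing the rescaling. The remaining case $\phi(1^{-})=\infty$ (that is, $u$ unbounded near $\partial B(0,1)$) is reduced to the previous one by proving the inequality first on each ball $B(0,\sigma)$, $\sigma<1$, where $\phi$ is bounded, and then letting $\sigma\to1$. I expect the bootstrap to be the main obstacle: choosing the radii and bookkeeping the product of constants so that the exponent of $A$ converges exactly to $1/p$, together with the control of the boundary behaviour of $u$ needed to make the residual factor disappear.
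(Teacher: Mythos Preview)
Your argument is correct. The paper itself does not supply a proof of this theorem; it is quoted as background, with the attribution ``The proof was first given by Fefferman and Stein \cite{FS}, and independently by Kuran \cite{KU}.'' What you have written is precisely the classical Fefferman--Stein iteration: the sub-mean value inequality plus Jensen handles $p\ge 1$, and for $0<p<1$ one bounds the radial maximum $\phi(\rho)$ recursively in terms of $\phi(s)^{1-p}$ and the $L^{p}$-mass, then iterates along a geometric sequence of radii so that the exponents telescope to $1/p$.

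Two minor remarks on the execution. First, the splitting into $\{u\le \phi(\rho)/2\}$ and its complement is harmless but unnecessary: since $0\le u\le \phi(s)$ on $B(x_{0},s-\rho)$, the pointwise bound $u=u^{p}u^{1-p}\le \phi(s)^{1-p}u^{p}$ already holds everywhere there, giving $\phi(\rho)\le (s-\rho)^{-n}A\,\phi(s)^{1-p}$ directly without the factor $2$. Second, the case distinction $\phi(1^{-})=\infty$ never actually arises once the reduction is set up properly: since $B(a,r)\subset\Omega$ you only need $u$ subharmonic on $B(0,1)$ after rescaling, and your approximation by $B(0,\sigma)$, $\sigma\uparrow 1$, on which $u$ is bounded (upper semicontinuity on the compact $\overline{B(0,\sigma)}\subset B(0,1)$), is exactly the right way to finish.
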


Let $u\ge 0$ be a locally bounded, measurable function on $\Omega$. We say
(see \cite{Pa}, \cite{PR}) that $u$ is $C$-{\it quasi-nearly subharmonic}
(abbreviated $C$-qns)
if the following condition is satisfied:
 \begin{equation}\label{uu2}
    u(a)\le \frac C{r^n}\int_{B(a,r)}u\, dm, \quad
\text{whenever $B(a,r)\subset \Omega$.}
\end{equation}

One can view (\ref{uu2}) as a weak mean value property. Besides of
nonnegative subharmonic functions it also holds for nonnegative subsolutions
to a  large family of second order elliptic equations, see \cite{HKM}.
In fact, (\ref{uu2}) is typically proven as a step towards Harnack inequalities
for second order elliptic equations, using the Moser iteration scheme.
Notice that $u$ is quasi-nearly subharmonic if and only if $u$ is everywhere
dominated by its centered minimal function \cite{C}.

Our first result is an invariance property.

\begin{thm}\label{km} If $u\ge0$ is a $C$-qns function
defined on a domain $\Omega'\subset\mathbf R^n$, $n\ge 2,$
and $f$ is a $K$-quasiregular mapping, with bounded multiplicity $N$,
from a domain $\Omega$ onto $\Omega',$ then the function $u\circ f$ is
$C_1$-qns in $\Omega,$ where $C_1$ only depends on $K$, $C$, $N$, and $n.$
  \end{thm}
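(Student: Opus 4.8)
The goal is to verify the quasi-nearly subharmonic inequality $(u\circ f)(a)\le \frac{C_1}{r^n}\int_{B(a,r)}u\circ f\,dm$ for every ball $B(a,r)\subset\Omega$. The plan is to exploit the distortion properties of quasiregular mappings to transfer the mean-value estimate for $u$ on $\Omega'$ back to a ball in $\Omega$. Fix $B=B(a,r)\subset\Omega$ and set $b=f(a)$. The key geometric fact I would invoke is that a $K$-quasiregular map distorts balls in a controlled way: by the well-known estimates for quasiregular mappings (the $L^p$-integrability of the Jacobian, the reverse Hölder / Gehring-type inequality, and Mori-type distortion bounds), there are constants depending only on $K$ and $n$ so that $f(B(a,r/2))$ contains a ball $B(b,\rho)$ with $\rho$ comparable to the ``average size'' of $f$ on $B$, and simultaneously the image $f(B(a,r))$ is not too large. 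Concretely, I expect to use that $\rho^n$ is comparable, up to constants depending on $K,n$, to $\frac{1}{|B|}\int_B J_f\,dm$, where $J_f$ is the Jacobian.

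\smallskip

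First I would apply the $C$-qns hypothesis for $u$ on the ball $B(b,\rho)\subset\Omega'$:
\[
u(b)\le \frac{C}{\rho^n}\int_{B(b,\rho)}u\,dm.
\]
Next I would pull this integral back through $f$. Since $f$ has multiplicity at most $N$, the change-of-variables inequality for quasiregular mappings gives
\[
\int_{B(b,\rho)}u\,dm\le \int_{f(B(a,r/2))}u\,dm\le \int_{B(a,r/2)}(u\circ f)\,J_f\,dm
\]
(using that the multiplicity function $N(y,f,B(a,r/2))\le N$, so integrating $u\circ f\cdot J_f$ over the source dominates $N^{-1}$ times... — more carefully, $\int_{B(a,r/2)}(u\circ f)J_f\,dm=\int_{\Omega'}u(y)N(y,f,B(a,r/2))\,dm(y)\ge \int_{f(B(a,r/2))}u\,dm$ when the multiplicity is $\ge 1$ on the image, and $\le N\int_{\Omega'}u$, but here I want the lower bound on the image side and an upper bound in terms of $u\circ f$, which the first inequality supplies). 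Then I must bound $\int_{B(a,r/2)}(u\circ f)J_f\,dm$ by a constant times $\left(\frac{1}{|B|}\int_B J_f\right)\int_{B(a,r)}(u\circ f)\,dm$; this is where I would combine the reverse Hölder inequality for $J_f$ (to control high integrability of $J_f$) with a Hölder splitting, or more cleanly, I would subdivide $B(a,r/2)$ into a bounded number of sub-balls and apply the qns-type regularity of $u\circ f$ iteratively — but that is circular, so instead the honest route is: use that $J_f\le C(K,n)\,(\text{average of }J_f\text{ over a slightly larger ball})$ fails pointwise, hence one genuinely needs the self-improving higher integrability of $J_f$ together with the fact that $u\circ f$ inherits a reverse-Hölder-type control from $u$ being locally bounded; combining via Hölder's inequality with exponents matched to the Gehring exponent yields the bound with $\rho^n$ reappearing to cancel.

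\smallskip

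Putting the chain together, $u(f(a))=u(b)\le \frac{C}{\rho^n}\int_{B(b,\rho)}u\,dm\le \frac{C\,C(K,n,N)}{\rho^n}\cdot\rho^n\cdot\frac{1}{|B(a,r)|}\int_{B(a,r)}u\circ f\,dm$, which after absorbing the normalization $|B(0,1)|=1$ gives exactly the $C_1$-qns inequality with $C_1=C_1(C,K,N,n)$. The main obstacle, and the step deserving the most care, is the two-sided comparison between $\rho^n$ (the radius of the inscribed image ball around $f(a)$) and $\frac{1}{|B|}\int_B J_f$, together with the passage from $\int (u\circ f)J_f$ over the half-ball to $\left(\frac1{|B|}\int_B J_f\right)\int_{B(a,r)}u\circ f$: this requires the quantitative distortion theory (equicontinuity/Mori estimates, the $A_\infty$-property and Gehring's lemma for $J_f$) applied uniformly in $a$ and $r$, and keeping all constants dependent only on $K,n$ (and $N$ from the multiplicity). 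Everything else is bookkeeping with the change-of-variables formula $\int_A (u\circ f)J_f = \int u(y)N(y,f,A)\,dy$ valid for quasiregular $f$.
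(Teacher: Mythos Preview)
Your overall architecture matches the paper's: reduce to a fixed ball, use the distortion bound $B(f(a),\rho)\subset f(B)\subset B(f(a),R)$ with $R/\rho$ controlled by $K,N,n$ (the paper's Lemma~2.2/2.3), apply the qns inequality for $u$ on $B(f(a),\rho)$, pull back via the change-of-variables formula $\int_B(u\circ f)J_f=\int u\,N(\cdot,f,B)\ge\int_{B(f(a),\rho)}u$, and invoke the reverse H\"older inequality for $J_f$. So the skeleton is right.

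The gap is the step you flag yourself: bounding $\int_{B(a,r/2)}(u\circ f)\,J_f\,dm$ by a constant times $\bigl(\tfrac{1}{|B|}\int_B J_f\bigr)\int_{B(a,r)}(u\circ f)\,dm$. This inequality is \emph{not} a consequence of $J_f\in A_\infty$ or of Gehring's lemma alone; $A_\infty$ compares $J_f\,dm$-measure and $dm$-measure of \emph{sets}, not integrals of arbitrary nonnegative functions against $J_f$. Your fallback --- that ``$u\circ f$ inherits a reverse-H\"older-type control from $u$ being locally bounded'' --- is not true: local boundedness gives no reverse H\"older inequality. If you carry out the H\"older splitting you mention, you arrive honestly at
\[
c\rho^n\,v(0)\le\Bigl(\int_B v^q\Bigr)^{1/q}\Bigl(\int_B J_f^{p}\Bigr)^{1/p}\le C N R^n\Bigl(\int_B v^q\Bigr)^{1/q},
\]
with $q=p/(p-1)$ and $p$ the Gehring exponent; but this is an $L^q$ mean-value inequality for $v=u\circ f$, not the $L^1$ one you want, and there is no general way to descend from $L^q$-averages to $L^1$-averages for an arbitrary function.

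The paper closes this loop with one extra ingredient you are missing: Pavlovi\'c's lemma (the paper's Lemma~2.1) that $u$ is $C$-qns if and only if $u^p$ is $C'(C,p,n)$-qns for any $p>0$. With this in hand it \emph{suffices} to prove $v(0)\le C\bigl(\int_B v^q\bigr)^{1/q}$, i.e.\ that $v^q$ is qns, and then Lemma~2.1 gives that $v$ itself is qns. Once you insert this lemma, your sketch becomes exactly the paper's proof.
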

\begin{rem}
The hypothesis of bounded multiplicity of $f$ is necessary as the 
following  example shows. Let $f(z)=e^z$, $\Omega=\mathbb C,$ 
$\Omega'=\mathbb C\setminus\{0\},$ 
$$ E=\bigcup_{j\ge2}[\exp(2^j),\exp(2^j+1)],$$
and $u(w)=\chi_E(|w|).$
Then it is easy to check that $u$ is quasi-nearly subharmonic in $\Omega'$ 
but $u\circ  f$ is not quasi-nearly subharmonic in $\Omega.$
\end{rem}
Above quasiregularity requires that $f$ is continuous, the component functions
of $f$ belong locally to the Sobolev class $W^{1,n}$ and that there is
a constant $K\ge 1$ so that
$$|Df(x)|^n\le K J(x,f)$$
holds almost everywhere in $\Omega.$ Injective quasiregular mappings are
called quasiconformal.
It was previously only known that the invariance holds under conformal mappings
in the planar case \cite{VM} and under bi-Lipschitz mappings \cite{DR}
in all dimensions. Let us consider the above morphism property in more detail.


\begin{defn}
Let $\Omega$ and $\Omega'$ be subdomains of $\mathbf R^n.$
A mapping
$f:\Omega\mapsto \Omega'$ is a {\it qns-morphism} if there is a
constant $C<\infty$
such that for every qns $u$ defined in $\Omega'$ we have
\[\|u\circ f\|_{\rm qns}\le C\|u\|_{\rm qns},\]
where
\[\|u\|_{\rm qns}=\inf\Big\{C\ge 0: u(a)\le
\frac C{r^n}\int_{B(a,r)}u\,dm\
\text{for all $a\in \Omega',\ 0<r\le d(x,\partial \Omega')$}\Big\}.\]
If the above holds with a constant $C,$ we call $f$ a
$C$-qns-morphism. Finally, $f$ is a {\it strong qns-morphism}
if there is a constant $C$ so that $f$ restricted
to any domain $G\subset \Omega $, $f:G\mapsto G'$, is a $C$-qns-morphism.
\end{defn}

\begin{thm}\label{new-pekka}
Let $\Omega,\,\Omega'\subset \mathbb R^n$, $n\ge2,$ be domains.
Then a homeomorphism $f:\Omega\mapsto \Omega'$
is a strong qns-morphism if and only if $f$ is quasiconformal.
\end{thm}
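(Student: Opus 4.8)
The plan is to prove the two implications separately. The forward direction (quasiconformal $\Rightarrow$ strong qns-morphism) is essentially a corollary of Theorem~\ref{km}: a quasiconformal $f$ is quasiregular, is injective (hence has multiplicity $N=1$), and its restriction to any subdomain $G$ is again $K$-quasiconformal onto $f(G)$ with the \emph{same} $K$. Thus if $u$ is $C$-qns on $G'=f(G)$, Theorem~\ref{km} gives that $u\circ f$ is $C_1$-qns on $G$ with $C_1=C_1(K,C,n)$ depending linearly (or at least in a controlled way) on $C$; tracking the proof of Theorem~\ref{km} one checks $C_1\le C\cdot C_2(K,n)$, which yields $\|u\circ f\|_{\rm qns}\le C_2(K,n)\,\|u\|_{\rm qns}$ uniformly over all subdomains $G$. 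So $f$ is a strong qns-morphism.

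The substantive direction is the converse: a homeomorphism $f$ that is a strong qns-morphism must be quasiconformal. The strategy is to feed $f$ a well-chosen family of qns test functions and read off metric distortion bounds. The natural candidates are (powers of) Green's functions or, more elementarily, the functions $u_{x_0}(y)=|y-x_0|^{-\alpha}$ for suitable $\alpha>0$ and $x_0\notin\Omega'$, together with characteristic-type functions of balls; such $u$ are subharmonic (or qns with a universal constant) away from their singularity, and one can localize by the strong-morphism hypothesis. Applying the qns inequality for $u\circ f$ at a point $a$ and a radius $r$, and comparing with the corresponding inequality for $u$ at $f(a)$, forces a two-sided control of the shape of $f(B(a,r))$: roughly, $f$ cannot map small balls to sets that are too eccentric, because an eccentric image would let one choose $u$ peaked near the "far" part of $\partial f(B(a,r))$ so that the averaged inequality for $u\circ f$ fails. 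Quantitatively one wants to derive that $H_f(a,r)=\max_{|x-a|=r}|f(x)-f(a)|\big/\min_{|x-a|=r}|f(x)-f(a)|$ is bounded, uniformly in $a$ and $r$; this is the metric definition of quasiconformality for homeomorphisms (equivalently, one can aim directly at the Väisälä/Gehring linear-dilatation characterization). Since the morphism constant is the same for every subdomain $G$, the bound on $H_f$ is genuinely uniform, which is exactly what is needed.

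The main obstacle I anticipate is the converse direction's \emph{lower} distortion bound — ruling out that $f$ collapses distances too much — and making the test-function argument fully rigorous rather than heuristic. One has to be careful that the chosen $u$ really is qns with a controlled norm on the relevant subdomain $G'$ (singularities must sit outside $G'$, and one must know the qns norm of $|y-x_0|^{-\alpha}$ or of a truncated Green function explicitly in terms of the geometry), and that the qns inequality, which is an \emph{averaged} statement, can be leveraged to a \emph{pointwise} shape estimate. The standard device here is to pick $r$ small, let $M=\max_{\partial B(a,r)}|f-f(a)|$ and $m=\min_{\partial B(a,r)}|f-f(a)|$, place the singularity of $u$ at distance $\approx m$ from $f(a)$ in the direction realizing the minimum, so that $u\circ f$ is large on a definite portion of $B(a,r)$ while $u(f(a))$ is comparably large; the qns inequality for $u\circ f$ then reads as a bound on the measure of $\{|y-x_0|<\text{const}\}\cap f(B(a,r))$, hence via the homeomorphism on the measure of a subball of $B(a,r)$, and Lebesgue-density/continuity of $f$ converts this into $M\le \text{const}\cdot m$. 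Running the symmetric argument (or using $f^{-1}$, which is also a strong qns-morphism once $f$ is known to be quasiconformal — but one cannot assume that yet, so one must instead exploit both a peaked and a "hole"-type test function) closes the loop and yields $H_f\le H<\infty$ uniformly, whence $f$ is quasiconformal by the metric characterization. Throughout, dimension $n\ge2$ is used to guarantee that the relevant power functions are subharmonic and that the metric definition of quasiconformality applies.
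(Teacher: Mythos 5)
Your forward implication matches the paper's: it is an immediate corollary of Theorem~\ref{km}, since a quasiconformal homeomorphism restricted to any subdomain $G$ is again $K$-quasiconformal onto $f(G)$ with multiplicity one, so the constant $C_1(K,C,N,n)$ in Theorem~\ref{km} is uniform over $G$.

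For the converse, the paper takes a genuinely different and more decisive route than the one you sketch, and the difference is exactly where your proposal stalls. Rather than trying to bound the linear dilatation $H_f$ of $f$ directly, the paper verifies the Heinonen--Koskela criterion applied to the \emph{inverse} map,
\[
\limsup_{r\to 0}\frac{\mathrm{diam}\big(f^{-1}(\overline B(y',r))\big)^n}{\big|f^{-1}(\overline B(y',r))\big|}\le H,
\]
which is equivalent to quasiconformality. This target is tailor-made for the hypothesis: the qns inequality for $u\circ f$ is an averaged statement and naturally produces a \emph{lower volume bound} for a preimage, which then pairs with a diameter estimate to give the criterion. There is no need to convert an averaged inequality into a pointwise ratio $M\le C\,m$ for $f$ itself, which is precisely the step you flag as problematic and leave open. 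Moreover your worry about a separate ``lower distortion bound'' disappears on this route: once $f^{-1}$ satisfies the criterion, quasiconformality of $f$ follows from the standard equivalences, and both directions of distortion control come for free.

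The second ingredient your sketch does not supply is the concrete way the \emph{strong} hypothesis is exploited. The paper uses only the test function $u=\chi_{\overline B(y',r)}$, but on all of $\Omega'$ this has qns norm blowing up as $r\to 0$. The fix is to delete a single point $y_0'\in\partial B(y',2r)$ from $\Omega'$ (and its preimage $y_0$ from $\Omega$) and to work on $G'=\Omega'\setminus\{y_0'\}$, $G=\Omega\setminus\{y_0\}$; since every point of $\overline B(y',r)$ is within $3r$ of the deleted boundary point, $\|\chi_{\overline B(y',r)}\|_{\mathrm{qns}}\le 3^n$ on $G'$ uniformly in $r$. Applying the $C$-qns-morphism inequality for $u\circ f$ at the preimage $y_1$ of a farthest point $y_1'\in\partial B(y',r)$, over the ball $B(y_1,|y_1-y_0|/2)\subset G$, yields
\[
1 = u\circ f(y_1)\le C\,3^n\,\frac{\big|f^{-1}(\overline B(y',r))\cap B(y_1,|y_1-y_0|/2)\big|}{\big|B(y_1,|y_1-y_0|/2)\big|},
\]
and combining this with $\mathrm{diam}\,f^{-1}(\overline B(y',r))\le 2|y_1-y_0|$ gives the diameter-to-volume bound. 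Your proposal gestures at ``localization by the strong-morphism hypothesis'' and lists several families of candidate test functions (power functions, Green functions, characteristic functions), but never identifies the point-deletion mechanism or a criterion that the averaged qns inequality can actually verify; as written it is a plan with the crucial steps missing rather than a proof.
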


If we assume sufficient a priori regularity for $f,$  a version of
Theorem \ref{new-pekka} holds also for qns-morphisms.
The reader may wish to compare this with related quasiconformal invariance
properties for other function classes \cite{A}, \cite{R}, \cite{S1},
\cite{S2}, \cite{U}.


\begin{thm}\label{km-conv}
Let $n\ge 2$ and let $f:\Omega\mapsto\Omega'$ be a qns-morphism
that belongs to $W^{1,n}_{\rm loc}$.
If, additionally, $J(x,f)\ge 0$ almost everywhere, then $f$ is quasiregular.
\end{thm}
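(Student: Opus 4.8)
The plan is to show that a qns-morphism $f\in W^{1,n}_{\rm loc}$ with $J(x,f)\ge 0$ a.e.\ satisfies the pointwise distortion inequality $|Df(x)|^n\le K\,J(x,f)$ almost everywhere. First I would reduce to a statement about linear maps: at almost every point $x_0\in\Omega$, the map $f$ is approximately differentiable, so near $x_0$ it is well approximated (in the Sobolev sense, via blow-ups $f_j(x)=\bigl(f(x_0+r_jx)-f(x_0)\bigr)/r_j$) by its derivative $A=Df(x_0)$. I expect the qns-morphism property to pass to the blow-up limit: if $f$ is a $C$-qns-morphism on every subdomain $G\subset\Omega$, then the linear map $A$ must itself be a $C$-qns-morphism on $\mathbf R^n$ (with the same constant $C$, independent of $x_0$). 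This is where one uses that the hypothesis is about \emph{all} qns $u$ on $\Omega'$, not a fixed one: by restricting to small balls and rescaling, test functions for $A$ pull back to test functions for $f$ near $x_0$, up to errors controlled by the $W^{1,n}$-convergence of the blow-ups.

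The second, and main, step is a purely linear-algebra computation: a nonsingular linear map $A\colon\mathbf R^n\to\mathbf R^n$ is a $C$-qns-morphism (with $C$ depending only on $n$) if and only if it is $K$-quasiconformal, i.e.\ $|A|^n\le K\,|\det A|$ with $K=K(C,n)$. The ``only if'' direction is the one I need. To prove it, I would feed $A$ the standard qns test functions: powers of moduli of linear functionals, or more simply characteristic-type functions and the fundamental examples $u(y)=|\langle y,e\rangle|^p$ for small $p>0$. If $A$ has singular values $0<\lambda_1\le\cdots\le\lambda_n$, then $A$ maps balls to ellipsoids with semiaxes $\lambda_i$; composing a qns function adapted to the long axis with $A$ and testing the mean-value inequality \eqref{uu2} on balls that $A$ stretches anisotropically forces $\lambda_n^n/(\lambda_1\cdots\lambda_n)$ to be bounded by a constant depending only on $C$ and $n$. (One must also separately rule out $\det A=0$: if $A$ is singular, its image lies in a hyperplane, and pulling back a qns function that blows up near that hyperplane at an admissible rate violates the morphism bound — this handles the a.e.\ nonvanishing of $J(x,f)$, which combined with $J\ge 0$ gives $J>0$ a.e.)

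Combining the two steps yields $|Df(x)|^n\le K\,J(x,f)$ for a.e.\ $x$, with $K$ depending only on the morphism constant and $n$; together with $f\in W^{1,n}_{\rm loc}$ this is exactly the analytic definition of $K$-quasiregularity. The main obstacle I anticipate is the first step: making rigorous that the qns-morphism property localizes and survives the blow-up limit. The subtlety is that the qns seminorm is defined by a supremum over all balls and all qns $u$, so one must argue that an asymptotic violation at scale $r_j\to 0$ for the linear model $A$ produces an honest qns function on a small subdomain $G\subset\Omega$ whose composition with $f$ violates the $C$-bound — this requires care in transferring the test function across the non-linear perturbation and in choosing $G$, and it is here that the hypothesis $f\in W^{1,n}_{\rm loc}$ (rather than merely continuous) is essential, since it is what makes the blow-ups converge strongly enough in $L^n$ for the mean-value integrals to pass to the limit.
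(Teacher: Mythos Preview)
Your overall strategy---reduce to the derivative at a point of (approximate) differentiability, then feed the linear map specific qns test functions to bound its distortion---is exactly the paper's.  The execution, however, differs in two respects, and one of them is a genuine gap.

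First, the paper does not set up blow-ups $f_j$ or invoke any $W^{1,n}$-convergence.  It argues directly: fix $x_0$, choose a \emph{bounded} qns function $u$ on $\Omega'$ (in fact $u$ is the characteristic function of a sector with vertex $f(x_0)$), and compute $\frac{1}{r^n}\int_{B(x_0,r)}u\circ f\,dm$ as $r\to 0$ using only that $f(x)=f(x_0)+Df(x_0)(x-x_0)+o(|x-x_0|)$ on a set of full density at $x_0$.  Because $u$ is bounded, the contribution from the exceptional set is $o(1)$, and the integral converges to the corresponding integral for the linear map.  This completely bypasses the obstacle you describe in your last paragraph; no limiting argument for the morphism property is needed.

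Second, your proposed test functions $u(y)=|\langle y,e\rangle|^p$ carry no information about $A$: since $u\circ A(x)=|\langle x,A^Te\rangle|^p$ is again of the same form, one has $\|u\circ A\|_{\rm qns}=\|u\|_{\rm qns}$ for \emph{every} invertible linear $A$, and the morphism inequality is vacuous.  The paper uses instead $u=\chi_{\{0\le y'\le x'\}}(\cdot-f(x_0))$ (in the planar case), which has $\|u\|_{\rm qns}=8$; if $Df(x_0)=\mathrm{diag}(a,b)$ with $0<a<b$, the pullback is, to leading order, the characteristic function of the thin sector $\{0\le y\le (a/b)x\}$, and the qns inequality at $x_0$ forces $b/a\le C/2$.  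Your phrase ``characteristic-type functions'' points in the right direction; the content of the proof is that these, not the powers of linear functionals, are the functions that detect linear distortion.  Similarly, your plan for the singular case---``a qns function that blows up near that hyperplane''---fails because qns functions are by definition locally bounded; the paper handles $J_f(x_0)=0$, $Df(x_0)\ne 0$ with another sector characteristic function whose pullback has density zero at $x_0$, contradicting the morphism bound.
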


Each quasiregular mapping $f$ is either constant or both open and discrete;
in the latter case the multiplicity of $f$ is locally finite.  
The condition $J(x,f)\ge 0$ in Theorem \ref{km-conv} cannot
be dropped, as the mapping $f(x,y)=(x,|y|)$ is a planar (strong) qns-morphism.
The Sobolev regularity assumption can be slightly
relaxed: if $f$ above is a $C$-qns-morphism, then local $p$-integrability
of the distributional derivatives suffices for $p=p(n,C)<n;$ this
can be inferred from the proof of Theorem \ref{km-conv} using \cite{I}.
In the planar, injective setting, even $W^{1,1}_{\rm loc}$ suffices.




Let us close this introduction by commenting on the invariance of
a related function class, introduced in \cite{Pa}.

\begin{defn}
A function $u:\Omega'\mapsto \mathbb R^k$ is said to be {\it
regularly oscillating}
if
\begin{equation}\label{osc}
    {\rm Lip}\,u(x)\le Cr^{-1}\sup_{y\in B(x,r)\subset\Omega'}|u(y)-u(x)|,\quad x\in \Omega',\ B(x,r)\subset \Omega',
\end{equation}
where $C\ge 0$ is a constant independent of $x$ and $r$. Here
\[{\rm Lip}\,u(x)=\limsup_{y\to x}\frac{|u(y)-u(x)|}{|y-x|}.\]
Note that ${\rm Lip}\,u(x)=|{\rm grad}\, u(x)|$ if $u$ is differentiable at
$x.$
The smallest $C$ satisfying \eqref{osc} will be denoted by $\|u\|_{\rm ro}.$
\end{defn}

We have the following invariance.

\begin{thm}\label{ro}
Let $f:\Omega\mapsto\Omega'$ be quasiregular, regularly oscillating and of
bounded multiplicity in $\Omega$. If $u$ is regularly oscillating in
$\Omega'$, then $u\circ f$ is regularly oscillating in
$\Omega$ with $\|u\circ f\|_{\rm ro}\le C'\|u\|_{\rm ro}$,
where $C'$ depends only on the multiplicity of $f$.
\end{thm}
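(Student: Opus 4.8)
The plan is to prove Theorem~\ref{ro} by a direct chain-rule type estimate, exploiting that both the inner map $f$ and the outer function $u$ satisfy the oscillation condition \eqref{osc}, together with the distortion estimates available for quasiregular mappings of bounded multiplicity. First I would fix $x\in\Omega$ and a ball $B(x,\rho)\subset\Omega$, and aim to bound $\mathrm{Lip}(u\circ f)(x)$. Since $f$ is quasiregular and nonconstant (the constant case being trivial), $f$ is differentiable a.e.\ and, more usefully, $\mathrm{Lip}\,f(x)<\infty$ at every point by the regular oscillation hypothesis; combined with the triangle inequality $|u(f(y))-u(f(x))|\le |u(f(y))-u(f(x))|$ one gets the pointwise submultiplicative bound $\mathrm{Lip}(u\circ f)(x)\le \mathrm{Lip}\,u(f(x))\cdot\mathrm{Lip}\,f(x)$, valid whenever $u$ is, say, differentiable at $f(x)$; in general I would run the $\limsup$ directly through the composition.

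Next I would relate the right-hand side back to an oscillation of $u\circ f$ over a ball in $\Omega$. Apply \eqref{osc} for $u$ at the point $f(x)$ with radius $s$ chosen comparable to the image scale $\sup_{y\in B(x,r)}|f(y)-f(x)|$ for a suitable $r\le\rho$; then
\[
\mathrm{Lip}\,u(f(x))\le C\,s^{-1}\!\!\sup_{z\in B(f(x),s)}\!|u(z)-u(f(x))|.
\]
The key geometric input is a two-sided comparison between the Euclidean oscillation of $f$ on $B(x,r)$ and the quantity $r\,\mathrm{Lip}\,f(x)$: the upper bound $\sup_{y\in B(x,r)}|f(y)-f(x)|\le C r\,\mathrm{Lip}\,f(x)$ is immediate from regular oscillation of $f$, while the reverse inequality, i.e.\ that $f$ does not shrink $B(x,r)$ by more than a bounded factor relative to $r\,\mathrm{Lip}\,f(x)$, is exactly where quasiregularity and bounded multiplicity enter, via the standard estimate that for such maps the image of a ball contains a ball of comparable radius (a consequence of the distortion/normal-family theory for qr maps with controlled multiplicity). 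Choosing $s$ inside $f(B(x,r))$ lets me replace $\sup_{z\in B(f(x),s)}|u(z)-u(f(x))|$ by $\sup_{y\in B(x,r)}|u(f(y))-u(f(x))|=\sup_{y\in B(x,r)}|(u\circ f)(y)-(u\circ f)(x)|$, and assembling the inequalities yields
\[
\mathrm{Lip}(u\circ f)(x)\le C'\,r^{-1}\!\!\sup_{y\in B(x,r)}\!|(u\circ f)(y)-(u\circ f)(x)|,
\]
which is \eqref{osc} for $u\circ f$ with $\|u\circ f\|_{\rm ro}\le C'\|u\|_{\rm ro}$ and $C'$ depending only on $n$, $K$, and the multiplicity $N$ (the statement records the dependence on $N$; the $K$- and $n$-dependence is absorbed into $C'$).

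The main obstacle will be making the size comparison between $f(B(x,r))$ and $r\,\mathrm{Lip}\,f(x)$ fully rigorous at an arbitrary point, including points of non-differentiability of $f$ and points where the local index is large: I need a uniform statement that $f(B(x,r))\supset B(f(x),c\,L_f(x,r))$ with $L_f(x,r)=\sup_{|y-x|=r}|f(y)-f(x)|$ and $c=c(n,K,N)>0$, which is where bounded multiplicity is essential (without it the linking of scales fails, paralleling the counterexample philosophy in the Remark after Theorem~\ref{km}). A secondary technical point is handling the case where $\mathrm{Lip}\,f(x)=0$ or where $u$ is constant near $f(x)$, both of which make the target inequality trivial, and the case where $f(x)$ is a branch point, where one should pass to a slightly smaller radius so that $B(f(x),s)\subset\Omega'$ and the oscillation of $u$ is controlled. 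Once these geometric facts are in place, the rest is the routine bookkeeping of constants sketched above.
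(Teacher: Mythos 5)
Your proposal follows essentially the same route as the paper: the chain-rule bound $\mathrm{Lip}(u\circ f)(x)\le \mathrm{Lip}\,u(f(x))\,\mathrm{Lip}\,f(x)$, the regular-oscillation estimate for $u$ at the scale of $f(B(x,r))$, the regular-oscillation estimate for $f$ at scale $r$, and the geometric input that $f(B(x,r))$ contains a ball about $f(x)$ of radius comparable to $\sup_{\overline B(x,r)}|f-f(x)|$, which is precisely Lemma \ref{vesna} (a consequence of quasiregularity and bounded multiplicity). The paper's proof is just a tightened version of your sketch in which the two occurrences of $d(f(x),\partial f(B(x,r)))$ cancel directly; your bookkeeping and the cases you flag ($\mathrm{Lip}\,f(x)=0$, branch points, constant $u$) are the same ones implicitly handled there.
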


The assumption that $f$ be regularly oscillating is necessary, as seen
by noticing that the coordinate projections are regularly oscillating; not
all quasiregular mappings are regularly oscillating.
In the case of an analytic function, this can naturally be dropped.
Similarly to Theorem \ref{km-conv}, quasiregularity is necessary if
we assume that $J(x,f)\ge 0$ almost everywhere, but no a priori Sobolev
regularity is needed because regularly oscillating functions and mappings
are locally
Lipschitz continuous. The invariance property of Theorem \ref{ro} was
established in \cite{VM} when
$f$ is conformal (and $n=2$).

\begin{rem}
The assumption of bounded multiplicity of $f$ in Theorem \ref{ro}
is necessary as in the case of Theorem \ref{km}. To see this simply let
$f(z)=e^z,$ $\Omega=\mathbb C$,  $\Omega'=\mathbb C\setminus\{0\},$
$$ E=\bigcup_{j\ge 2}[\exp(2^j),\exp(2^j+1)],$$
and $v(w)=\int_0^{|w|}\chi_E(t)\,dt$. Then $v$ is regularly oscillating 
but $v\circ f$ is not.
\end{rem}

\section{Proof of the theorem \ref{km}}
For the proof we need some lemmas. The first says that if $u^p$ is qns
for some $p$, then so is $u.$
\begin{lem}\cite{Pa}\label{pp}
If $u$ is $C$-qns, and $p>0,$ then $u^p$ is $C_1(C,n)$-qns.
\end{lem}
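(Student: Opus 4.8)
The plan is to reduce the claim for general $p>0$ to the two cases $p \geq 1$ and $0 < p < 1$, exploiting a form of the reverse Hölder / self-improvement phenomenon. First suppose $p \geq 1$. Then $t \mapsto t^p$ is convex, so by Jensen's inequality applied to the probability measure $dm/|B(a,r)|$ on $B(a,r)$ (recall $|B(0,1)|=1$, so $|B(a,r)| = r^n$), we get
\[
\Big(\frac{1}{r^n}\int_{B(a,r)} u\, dm\Big)^p \leq \frac{1}{r^n}\int_{B(a,r)} u^p\, dm.
\]
Combining this with the $C$-qns inequality $u(a) \leq \frac{C}{r^n}\int_{B(a,r)} u\, dm$ and raising to the $p$-th power yields $u(a)^p \leq C^p \frac{1}{r^n}\int_{B(a,r)} u^p\, dm$, so $u^p$ is $C^p$-qns; here $C_1 = C^p$ does not even depend on $n$.

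The harder case is $0 < p < 1$, where convexity runs the wrong way and one cannot simply pull the power inside the integral. Here I would mimic the classical Fefferman–Stein / Hardy–Littlewood argument behind Theorem \ref{fs}. Fix $B(a,r) \subset \Omega$ and, by scaling, normalize so that $M := u(a) > 0$ (if $u(a)=0$ there is nothing to prove). The key is a good-$\lambda$ or distributional estimate: for a ball $B(a,r)$, one shows that the set where $u$ is comparably large to $u(a)$ occupies a definite fraction of the ball. Concretely, using the qns inequality on sub-balls $B(x,\rho) \subset B(a,r)$ together with a Vitali-type covering argument, one deduces that
\[
\big|\{x \in B(a,r) : u(x) > \lambda\}\big| \geq c\, r^n
\]
for some $\lambda \sim u(a)$ and some $c = c(C,n) > 0$ — this is exactly the step that makes the average of $u$ over the ball controllable from below by $u(a)$. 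Integrating $u^p$ only over that set gives
\[
\frac{1}{r^n}\int_{B(a,r)} u^p\, dm \geq \frac{1}{r^n}\int_{\{u>\lambda\}} u^p\, dm \geq \frac{1}{r^n}\, \lambda^p\, c\, r^n = c\, \lambda^p \geq c'\, u(a)^p,
\]
which is the desired inequality with $C_1 = (c')^{-1}$ depending only on $C$ and $n$. The main obstacle is establishing the distributional lower bound $|\{u > \lambda\} \cap B(a,r)| \gtrsim r^n$ with the right dependence: one must iterate the qns inequality on a controlled family of nested or disjoint balls and keep track of how the constant $C$ propagates, taking care that the covering/stopping-time argument only uses balls compactly contained in $\Omega'$ so that the qns hypothesis applies. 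Since this is precisely the content of \cite{Pa}, I would cite that reference for the detailed constants, but the argument above is the scheme I would reconstruct.
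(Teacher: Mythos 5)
The paper itself gives no proof of this lemma; it simply cites Pavlovi\'c \cite{Pa}. So the comparison is against the standard argument behind that citation.

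Your treatment of the case $p\ge1$ via Jensen's inequality is correct and essentially the standard one: convexity of $t\mapsto t^p$ gives $\bigl(\tfrac1{r^n}\int_{B}u\,dm\bigr)^p\le\tfrac1{r^n}\int_{B}u^p\,dm$, and combined with the qns bound this yields $u(a)^p\le C^p\,\tfrac1{r^n}\int_B u^p\,dm$.

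For $0<p<1$, however, your plan has a genuine gap, and it is also not the route taken in \cite{Pa} (nor in Fefferman--Stein/Kuran, whose scheme you invoke). The step you want --- ``for a ball $B(a,r)$ one shows that $|\{x\in B(a,r):u(x)>\lambda\}|\ge c\,r^n$ for some $\lambda\sim u(a)$ and $c=c(C,n)$'' --- is nowhere established, and it is not at all routine. The qns inequality only gives lower bounds on averages (it says $\int_{B(x,\rho)}u\ge \rho^nu(x)/C$), never upper bounds, so $\sup_{B(a,r)}u$ and $\operatorname{avg}_{B(a,r)}u$ are completely uncontrolled in terms of $u(a)$; the straightforward Chebyshev/Vitali manipulation you sketch controls $\int_F u$ from below but says nothing about $|F|$ unless one has an a priori bound on $\sup u$, which is not available. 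The weak-$(1,1)$ maximal inequality gives an \emph{upper} bound on superlevel sets, not a lower one. In fact, establishing such a distributional lower bound uniformly in $u$ is at least as hard as the lemma itself, and the concluding sentence ``since this is precisely the content of \cite{Pa}, I would cite that reference for the detailed constants'' makes the argument circular: you would be citing the lemma you are trying to prove.

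The actual proof in \cite{Pa} (and the Kuran/Fefferman--Stein proof of Theorem \ref{fs}) is an iteration, not a good-$\lambda$ argument. Normalize $a=0$, $r=1$, and set $M_k=\sup_{B(0,1-2^{-k})}u$ and $I=\int_{B(0,1)}u^p\,dm$. For $x\in B(0,1-2^{-k})$ and $\rho=2^{-k-1}$, write $u\le M_{k+1}^{1-p}\,u^p$ on $B(x,\rho)\subset B(0,1-2^{-k-1})$, so the qns inequality gives
\begin{equation*}
u(x)\le \frac{C}{\rho^n}\int_{B(x,\rho)}u\,dm\le C\,2^{(k+1)n}\,M_{k+1}^{1-p}\,I,
\end{equation*}
hence $M_k\le C\,2^{(k+1)n}\,M_{k+1}^{1-p}\,I$. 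Iterating and using local boundedness (so $M_{k}^{(1-p)^k}\to1$) together with $\sum_j(1-p)^j=1/p$ and $\sum_j(j+2)(1-p)^j<\infty$ yields $u(0)\le M_1\le C_1 I^{1/p}$, i.e.\ $u(0)^p\le C_1^p\int_{B(0,1)}u^p\,dm$, which is the claim. This ``peel off a power and bootstrap over a chain of balls'' argument is the missing idea; I suggest you replace the distributional step with it. (As a minor side remark, both your $p\ge1$ bound $C^p$ and the iteration constant depend on $p$, so the constant in the lemma should really be $C_1(C,n,p)$; for the fixed $p=1/q$ used in the proof of Theorem~\ref{km} this makes no difference.)
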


We also need the following lemma that can be distilled from the arguments
in \cite{HKms}. For the sake of completeness, we give a short proof below.

\begin{lem}\label{vesna}
Let $f:\Omega\mapsto \Omega'$ be $K$-quasiregular and of bounded
multiplicity $N.$ Let $x\in \Omega$ and $0<r\le \frac12 d(x,\partial\Omega
r)).$ Then
\begin{align*}
d\big(f(x),\partial f(B(x,
r))\big)\ge
\delta \sup_{y\in\overline B(x,
r)}|f(y)-f(x)|,
\end{align*}
where $\delta=\delta(n,K,N).$
\end{lem}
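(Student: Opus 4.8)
The plan is to establish the quantitative lower bound on the distance from $f(x)$ to the boundary of the image ball $f(B(x,r))$ in terms of the oscillation of $f$ on $\overline B(x,r)$, and this will be done by combining two classical facts about quasiregular mappings of bounded multiplicity: a Harnack-type growth estimate and the absence of large "holes" in images of balls. First I would set $L(x,r)=\sup_{y\in\overline B(x,r)}|f(y)-f(x)|$ and $\ell(x,r)=\inf_{y\in\partial B(x,r)}|f(y)-f(x)|$. Since $f$ is quasiregular (hence either constant — a trivial case — or open and discrete), one has the well-known estimate $L(x,r)\le C(n,K)\,\ell(x,r)$ whenever $B(x,2r)\subset\Omega$; this is a consequence of the Harnack inequality applied to the (nonnegative, quasi-subharmonic) function $y\mapsto |f(y)-f(x)|$ together with the normal family / distortion theory for quasiregular maps, and it is exactly the type of statement worked out in \cite{HKms}. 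The hypothesis $0<r\le\frac12 d(x,\partial\Omega)$ is precisely what guarantees $B(x,2r)\subset\Omega$ so that this estimate applies.

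Next I would argue that $f(B(x,r))$ contains the ball $B(f(x),\ell(x,r))$, equivalently that $d\big(f(x),\partial f(B(x,r))\big)\ge \ell(x,r)$. This is a topological fact: by openness and discreteness of $f$, the image $f(B(x,r))$ is open, and $f(\partial B(x,r))$ is a compact set each of whose points is at distance at least $\ell(x,r)$ from $f(x)$. Any point $w$ with $|w-f(x)|<\ell(x,r)$ can be joined to $f(x)$ inside $B(f(x),\ell(x,r))$, and lifting the resulting path (or using a degree/winding-number argument for the map $y\mapsto f(y)-w$ on $\overline B(x,r)$, whose degree at $0$ equals that at $f(x)-w$, which is positive) shows $w\in f(B(x,r))$. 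Hence $B(f(x),\ell(x,r))\subset f(B(x,r))$ and therefore $d\big(f(x),\partial f(B(x,r))\big)\ge\ell(x,r)\ge C(n,K)^{-1}L(x,r)$, giving the claim with $\delta=\delta(n,K,N)=C(n,K)^{-1}$ (the dependence on $N$ enters through the growth estimate in \cite{HKms}).

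The main obstacle is the first step: making precise and self-contained the inequality $L(x,r)\le C(n,K,N)\,\ell(x,r)$. One clean route is to invoke the fact that $\log|f(y)-f(x)|$ is, away from the preimages of $f(x)$, a solution of an $\mathcal A$-harmonic type equation (or to use directly that $u(y)=|f(y)-f(x)|$ satisfies a Harnack inequality on $B(x,2r)\setminus f^{-1}(f(x))$), so that $\sup_{\partial B(x,r)}u\le C\inf_{\partial B(x,r)}u$; the bounded multiplicity is used to control the number and nature of the exceptional preimages and to ensure the Harnack constant is uniform. Since the excerpt says this lemma "can be distilled from the arguments in \cite{HKms}", the short proof I would write would cite that reference for the growth estimate and then supply only the elementary topological degree argument of the second step. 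I would also dispose of the degenerate case $f$ constant at the outset, for which both sides vanish and the inequality is trivial.
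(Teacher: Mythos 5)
Your topological step---that $\partial f(B(x,r))\subset f(\partial B(x,r))$ because $f$ is open with $\overline B(x,r)\subset\Omega$, hence $d\big(f(x),\partial f(B(x,r))\big)\ge \ell(x,r):=\inf_{\partial B(x,r)}|f-f(x)|$ and $B(f(x),\ell(x,r))\subset f(B(x,r))$---is correct, and it is the same fact the paper uses implicitly when it asserts that the distance to $\partial f(B(x,r))$ is attained as $|f(\omega)-f(x)|$ for some $\omega\in\partial B(x,r)$. Where you part ways with the paper is in how the comparison between $L(x,r)=\sup_{\overline B(x,r)}|f-f(x)|$ and the inner radius is obtained. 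The paper does not quote a distortion estimate; it proves the needed bound directly by a self-contained condenser argument: it builds the logarithmic test function $u$ on the annulus between the spheres of radii $d(f(x),\partial f(B(x,r)))$ and $L(x,r)$ about $f(x)$, bounds $\int|\nabla(u\circ f)|^n$ from above via $K$-quasiregularity and the change-of-variables formula (this is exactly where the multiplicity $N$ enters), and from below via the continua $f^{-1}([f(x),f(\omega)])$ and $f^{-1}(F)$ joining $x$ to $\partial B(x,r)$ and $\partial B(x,r)$ to $\partial B(x,\tfrac32 r)$. You instead outsource $L\le C\ell$ to \cite{HKms}, which is defensible since the paper itself says the lemma ``can be distilled'' from there, but your first heuristic is not correct as stated: a Harnack inequality bounds a positive \emph{solution} from below by its supremum, while $y\mapsto|f(y)-f(x)|$ is only a subsolution (a qns function), for which one gets a sub-mean-value inequality and no two-sided comparison; and your initial assertion that the constant depends only on $n,K$ is false---$z\mapsto z^k$ on a small ball containing the origin shows $L/\ell$ must grow with the multiplicity. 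You do self-correct on both points, and the $\mathcal A$-harmonicity of $\log|f(\cdot)-f(x)|$ away from $f^{-1}(f(x))$ is the right underlying structure, but making that rigorous with a constant depending only on $n,K,N$ leads back to precisely the capacity computation the paper carries out. So your proposal is a sound outline, but it defers rather than replaces the paper's core estimate.
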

\begin{proof}
Let $x\in \Omega$ and let $0<r\le \frac 12 d(x,\partial\Omega).$
Now $f(x)$ is an interior point of $f(B(x,r))$ because $f$ is open.
Moreover
\[\sup_{y\in\overline B(x,r)}|f(y)-f(x)|=|f(z)-f(x)|\]
for some $z\in \partial B(x,r)$, and
\[0<d(f(x),\partial f(B(x,r))=|f(\omega)-f(x)|\]
for some $\omega\in \partial B(x,r)$.

Let $E=[f(x),f(\omega)]$ be the segment between $f(w)$ and $f(\omega),$
and $F$ be a segment that joins $f(z)$ to
$\partial \Omega'$ (or to infinity) outside the ball
\[B(f(x),|f(z)-f(\omega)|).\]
We may assume  that
\[|f(z)-f(x)|\ge 2|f(\omega)-f(x)|.\]
Let
\[ u(y)=
\begin{cases}
1, & y\in \overline B(f(x),|f(\omega)-f(x)|),\\
0, &  y\in B^c(f(x),|f(z)-f(x)|),\\
\log\frac{|f(z)-f(x)|}{|y-f(x)|}\Big/\log\frac{|f(z)-f(x)|}{|f(\omega)-f(x)|}\,& \text{elsewhere}.
\end{cases}
\]
Then, by a change of variables, see page 21 in \cite{Ri},
\begin{align*}
\int_\Omega |\nabla (u\circ f)|^n\, dm&\le K\int_{\Omega'}|(\nabla u)(f(y))|^n J_f(y)\,dm(y)\\&
\le KN\int_{\Omega'}|\nabla u|^n\,dm\\[0.5ex]&
\le \frac{KNC_n}{\log^{n-1}\dfrac{|f(z)-f(x)|}{|f(\omega)-f(x)|}}.
\end{align*}

On the other hand, since $f$ is open, the set $f^{-1}([f(x),f(\omega)])$
contains a continuum joining $x$ to $\partial B(x,r)$, and $f^{-1}(F)$ a
continuum joining $\partial B(x,r)$ to $\partial B(x,\frac32r)$.
By usual capacity estimates, see e.g. \cite{HKM}
\[\int_{\Omega}|\nabla(u\circ f)|^n\,dm\ge \delta_0(n,K)>0.\]
The claim follows.
\end{proof}


As an immediate consequence of Lemma \ref{vesna} we have:
\begin{lem}\label{prec}
Let $B=B(0,1)$ and let $f:2B\mapsto\Omega'$ be a $K$-qr mapping with
bounded multiplicity $N$ and such that $f(0)=0.$
Then there exist $\rho\in (0,1)$ and $R>0$ such that
$B(0,R)\supset f(B)\supset B(0,\rho),$ where $R/\rho\le 1/\delta,$ and
$\delta$ depends only on $K,n,N.$
\end{lem}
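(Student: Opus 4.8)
The plan is to derive Lemma~\ref{prec} directly from Lemma~\ref{vesna} by applying the latter at the center point $x=0$ with $r=1$. Since $f:2B\to\Omega'$ is $K$-quasiregular with bounded multiplicity $N$ and $f(0)=0$, we have $1=r\le\frac12 d(0,\partial(2B))=1$, so the hypothesis of Lemma~\ref{vesna} is met (with the domain $2B$ in place of $\Omega$). First I would set
\[
R:=\sup_{y\in\overline B(0,1)}|f(y)-f(0)|=\sup_{y\in\overline B}|f(y)|,
\]
which is finite because $f$ is continuous and $\overline B$ is compact, and $R>0$ because $f$ is nonconstant (it has bounded multiplicity $N$, hence is open and discrete, so it cannot be constant on the open set $2B$). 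With this choice $f(B)\subset\overline{f(B)}\subset B(0,R)$ — or, being a touch more careful, $f(B)\subset B(0,R)$ since a point of $\overline B$ realizing the supremum lies on $\partial B$ and values on $B$ are strictly closer; in any case $f(B)\subset B(0,R)$ (enlarging $R$ by an arbitrarily small amount if one wishes to avoid the boundary discussion entirely).

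Next I would set $\rho:=d\big(f(0),\partial f(B)\big)=d\big(0,\partial f(B(0,1))\big)$. Because $f$ is open, $f(0)=0$ is an interior point of $f(B)$, so $\rho>0$, and by definition of the distance to the boundary of the open set $f(B)$ we get $B(0,\rho)\subset f(B)$. Lemma~\ref{vesna}, applied with $x=0$ and $r=1$, gives exactly
\[
\rho=d\big(f(0),\partial f(B(0,1))\big)\ge \delta\sup_{y\in\overline B(0,1)}|f(y)-f(0)|=\delta R,
\]
with $\delta=\delta(n,K,N)\in(0,1]$. Hence $R/\rho\le 1/\delta$, which is the asserted bound; replacing $\delta$ by $\min\{\delta,1/2\}$ if necessary we may also assume $\rho<R$, and since $R\le$ (finite) we have $\rho\in(0,1)$ after, if needed, rescaling — but in fact the statement only asks for $\rho\in(0,1)$ and $R>0$ with $R/\rho\le1/\delta$, so if $\rho\ge1$ one simply notes the conclusion still holds because $B(0,1)\subset B(0,\rho)\subset f(B)$ and we may harmlessly shrink $\rho$ to any value in $(0,1)$ while only improving the inclusion (and we may likewise take $R$ slightly larger). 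I would phrase this last normalization in one sentence rather than belaboring it.

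There is essentially no obstacle here: the lemma is a bookkeeping corollary, and the only point requiring a word of care is the reduction of the hypothesis $0<r\le\frac12 d(x,\partial\Omega)$ of Lemma~\ref{vesna} to the present situation — one must check that with $\Omega=2B$, $x=0$, $r=1$ the inequality $r\le\frac12 d(0,\partial(2B))$ holds as an equality, which it does since $d(0,\partial(2B))=2$. The slightly awkward cosmetic issue is ensuring the final constants have the exact form claimed ($\rho\in(0,1)$, $R/\rho\le1/\delta$); I expect to handle that by the rescaling/shrinking remark above, noting that the inclusions $B(0,\rho)\subset f(B)\subset B(0,R)$ are preserved under decreasing $\rho$ and increasing $R$, so there is no loss in forcing $\rho<1$.
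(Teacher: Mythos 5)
Your proof is exactly the paper's intended argument: the paper itself presents Lemma~\ref{prec} merely as ``an immediate consequence of Lemma~\ref{vesna}'', and applying Lemma~\ref{vesna} at $x=0$, $r=1$ (inside $\Omega=2B$, where $\frac12 d(0,\partial(2B))=1$) with $R:=\sup_{\overline B}|f|$ and $\rho:=d(0,\partial f(B))$ gives $\rho\ge\delta R$, hence $R/\rho\le 1/\delta$, together with the inclusions $B(0,\rho)\subset f(B)\subset B(0,R)$ by openness of $f$. This is correct and is the heart of the matter.

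One caution about the closing normalization remark: shrinking $\rho$ to force $\rho\in(0,1)$ while simultaneously allowing $R$ to grow does \emph{not} preserve the inequality $R/\rho\le 1/\delta$ — it can only worsen the ratio. If one genuinely has $\rho\ge 1$ and also $\delta R\ge 1$ (e.g.\ $f(x)=Mx$ for large $M$), there is no way to meet both $\rho\in(0,1)$ and $R/\rho\le1/\delta$. So the clause ``$\rho\in(0,1)$'' in the statement cannot be recovered by this maneuver; it appears to be an inessential slip in the paper (only the ratio bound $R/\rho\le1/\delta$ and the two inclusions are used in the proof of Theorem~\ref{km}). It would be cleaner to simply drop the attempt to force $\rho<1$ and state the conclusion as $B(0,\rho)\subset f(B)\subset B(0,R)$ with $R/\rho\le1/\delta$, which is what you actually prove.
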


Finally we need the following fundamental fact:
\begin{lem}\label{HK}\cite[p. 258]{HK}
Under the hypotheses of Lemma \ref{prec}, there exists $p>1$ such that
\[\ \Big(\int_BJ(y,f)^p\,dm\Big)^{1/p}\le C\int_B J(y,f)\,dm,\]
where $p$ depends only on $K,N$ and $n.$
\end{lem}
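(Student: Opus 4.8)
\emph{Outline.} The plan is to deduce higher integrability of $g:=|Df|^{n}$ on $B$ from a Caccioppoli inequality for quasiregular maps together with Gehring's lemma, and then, via the distortion estimate of Lemma \ref{vesna}, to bound the integral over the larger ball that inevitably appears on the right-hand side by $\int_{B}J(y,f)\,dm$. Since $J(\cdot,f)\le g$, the resulting inequality $\big(\int_{B}g^{p}\,dm\big)^{1/p}\le C\int_{B}J(y,f)\,dm$ implies the assertion. Throughout, $|B(0,s)|=s^{n}$ by the normalization $|B(0,1)|=1$; we use that $f$ is continuous, lies in $W^{1,n}_{\rm loc}(2B)$, and satisfies $\tfrac1K|Df|^{n}\le J(\cdot,f)\le|Df|^{n}$ a.e.; and we record that by the area formula $\int_{U}J(y,f)\,dm=\int_{\mathbb R^{n}}N(y,f,U)\,dm$, with $N(y,f,U)$ the number of preimages of $y$ under $f|_{U}$, one has $\int_{B}J(y,f)\,dm\ge|f(B)|>0$.

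\emph{Weak reverse H\"older inequality.} The crucial local estimate is the Caccioppoli inequality: for every ball with $\overline{B(x,2s)}\subset 2B$ and every $c\in\mathbb R^{n}$,
\[\int_{B(x,s)}|Df|^{n}\,dm\le\frac{C(n,K)}{s^{n}}\int_{B(x,2s)}|f-c|^{n}\,dm .\]
This holds because a $K$-quasiregular $f$ is a $K$-quasiminimizer of the $n$-energy: the Jacobian $\det Df$ is a null Lagrangian, so $\int\det Df=\int\det Dg$ whenever $f-g\in W^{1,n}_{0}$, whence $\int|Df|^{n}\le K\int\det Df=K\int\det Dg\le K\int|Dg|^{n}$; testing this with $g=f-\eta(f-c)$ for a standard cutoff $\eta$, followed by the usual hole-filling and iteration lemma, gives the displayed inequality. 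Choosing $c$ to be the mean of $f$ over $B(x,2s)$ and combining with the scale-invariant Sobolev--Poincar\'e inequality for the subcritical exponent $n/2<n$, we obtain, writing $|Df|^{n/2}=g^{1/2}$, the weak reverse H\"older inequality
\[\frac1{s^{n}}\int_{B(x,s)}g\,dm\le C_{0}(n,K)\Big(\frac1{(2s)^{n}}\int_{B(x,2s)}g^{1/2}\,dm\Big)^{2},\qquad\overline{B(x,2s)}\subset 2B .\]

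\emph{Gehring's lemma and reduction to $B$.} By Gehring's lemma this self-improves: there exist an exponent $p>1$ depending only on $n$ and $K$, and $C_{1}=C_{1}(n,K)$, with $g\in L^{p}_{\rm loc}(2B)$ and
\[\Big(\frac1{s^{n}}\int_{B(x,s)}g^{p}\,dm\Big)^{1/p}\le\frac{C_{1}}{(2s)^{n}}\int_{B(x,2s)}g\,dm,\qquad\overline{B(x,2s)}\subset 2B .\]
Covering $\overline{B}$ by finitely many balls $B(x_{i},s_{i})$, their number depending only on $n$, with $\overline{B(x_{i},2s_{i})}\subset B(0,\tfrac32)$, summing over $i$ and using $|B|=1$ gives
\[\Big(\int_{B}g^{p}\,dm\Big)^{1/p}\le C_{2}(n,K)\int_{B(0,3/2)}g\,dm\le C_{2}K\int_{B(0,3/2)}J(y,f)\,dm\le C_{2}KN\,\big|f(B(0,\tfrac74))\big| ,\]
the last step using the area formula together with $N(y,f,B(0,\tfrac32))\le N$ and $f(\overline{B(0,\tfrac32)})\subset f(B(0,\tfrac74))$. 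It remains to bound $|f(B(0,\tfrac74))|$ by $C(n,K,N)|f(B)|$; this is a distortion estimate, obtained by iterating Lemma \ref{vesna} along a chain, of a bounded number of balls of radius $\tfrac12$ inside $2B$, joining $0$ to an arbitrary point of $\overline{B(0,\tfrac74)}$ (equivalently, it is an instance of the classical distortion theorems for quasiregular mappings of bounded multiplicity, see \cite{Ri}; that the constant must grow with $N$ is seen from $z\mapsto z^{N}$). Since $J(\cdot,f)\le g$ and $\int_{B}J(y,f)\,dm\ge|f(B)|$, chaining the two displays yields
\[\Big(\int_{B}J(y,f)^{p}\,dm\Big)^{1/p}\le\Big(\int_{B}g^{p}\,dm\Big)^{1/p}\le C(n,K,N)\int_{B}J(y,f)\,dm .\]

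\emph{Where the difficulty lies.} The two genuinely substantial points are the Caccioppoli inequality for quasiregular maps --- concretely, recognizing $f$ as an $n$-energy quasiminimizer via the null-Lagrangian property of the Jacobian, after which the standard regularity chain (Caccioppoli $\Rightarrow$ weak reverse H\"older via Sobolev--Poincar\'e $\Rightarrow$ Gehring) runs mechanically --- and the passage from the local higher integrability, valid only on balls compactly inside $2B$, to the estimate on the fixed ball $B$, which is exactly where Lemma \ref{vesna} and the multiplicity bound $N$ are used. Everything else is routine bookkeeping.
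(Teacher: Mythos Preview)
The paper does not give its own proof of this lemma: it is simply quoted as a known fact, with a citation to Heinonen--Koskela (the page reference indicates that \cite{HKms} is intended rather than \cite{HK}). So there is no paper proof to compare against.

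Your argument is the standard route to this reverse H\"older inequality and is essentially correct. The chain
\[
\text{quasiminimizer via null Lagrangian} \Rightarrow \text{Caccioppoli} \Rightarrow \text{weak reverse H\"older} \Rightarrow \text{Gehring}
\]
produces the local estimate with $p=p(n,K)$, and the passage from the local inequality on small balls to the stated inequality on the fixed ball $B$ is precisely where the multiplicity $N$ enters, via $\int_{B(0,3/2)}J(\cdot,f)\,dm\le N\,|f(B(0,3/2))|$ and the distortion bound $|f(B(0,3/2))|\le C(n,K,N)\,|f(B)|\le C(n,K,N)\int_{B}J(\cdot,f)\,dm$. This is exactly how the cited reference proceeds.

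Two small remarks. First, in your chaining step the balls cannot all have radius $\tfrac12$: for centers near $\partial B(0,7/4)$ the constraint $r\le\tfrac12 d(x,\partial(2B))$ forces $r\le\tfrac18$, so the chain should use balls of radius $\tfrac18$ (still a bounded number depending only on $n$). Second, the actual inequality $|f(B(0,7/4))|\le C(n,K,N)|f(B)|$ does not follow from a single application of Lemma~\ref{vesna} but from iterating it along the chain to control $\sup_{\overline{B(0,7/4)}}|f|$ in terms of $\sup_{\overline{B}}|f|$; you acknowledge this and defer to \cite{Ri}, which is appropriate, but the phrase ``iterating Lemma~\ref{vesna}'' hides a Harnack-chain argument that deserves a line or two if you want the proof to be self-contained.
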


\subsection*{Proof of Theorem \ref{km}}
As is easily seen, the proof reduces to the case
$\Omega=B(0,2),$ $f(0)=0.$ Let $B=B(0,1)$ and write $v=u\circ f.$
By using translations and rotations, we see that it is enough to prove
that
\[ v(0)\le C\int_B v(y)\,dm(y),\]
where $C=C(K,n,\|u\|_{\rm qns}).$
By Lemma \ref{pp}, it suffices to find $q=q(K,N,n)\ge 1$ so that
\begin{equation}\label{}
    v(0)\le C\Big(\int_B v(y)^q\,dm(y)\Big)^{1/q}.
\end{equation}
To prove this, we start from H\"older's inequality:
\[\int_Bv(y)J(y,f)\,dm(y)\le \Big(\int_Bv(y)^q\,dm(y)\Big)^{1/q}\Big(\int_BJ(y,f)^p\,dm(y)\Big)^{1/p},\]
where $p=q/(q-1).$
By a change of variables, see page 21 in \cite{Ri}, we have
\[\begin{aligned}\int_{B}v(y)J(y,f)\,dm(y)&=\int_{f(B)}u(y)\,N(y,f,B)\,dm(y)\\&\ge \int_{f(B)}u(y)\,dm(y)\\&
\ge \int_{B(0,\rho)}u(y)\,dm(y)\\& \ge c\rho^n u(0)=c\rho^n v(0).
\end{aligned}
\]
Here we have used Lemma \ref{prec} and the hypothesis that $u$ is qns. On the other hand, by Lemmas \ref{HK} and \ref{prec}, we have
\[\begin{aligned}\Big(\int_{B}J(y,f)^p\,dm(y)\Big)^{1/p}\,dy&\le
C\int_B J(y,f)\,dm(y)\\&
=C\int_{f(B)}N(y,f,B)\,dm(y)\\&\le CN|f(B)|\\&\le  CN|B(0,R)|=CNk_nR^n.
\end{aligned}
\]
Combining these inequalities, we obtain
\[\begin{aligned}
c\rho^n v(0)\le CNk_nR^n\Big(\int_B v(y)^q\,dm(y)\Big)^{1/q}.
\end{aligned}
\]
Hence
\[v(0)\le\frac{CNk_nR^n}{c\rho^n}\Big(\int_B v(y)^q\,dm(y)\Big)^{1/q}. \]
Now the desired result follows from the inequality $R/\rho\le1/\delta,$ where $\delta$ depends only on $K,n,N.$

\section{Proof of Theorem \ref{km-conv}}

Even though our definition of quasiregular mappings requires them to be
continuous, this condition is superfluous and it suffices to show
that there exists $K\ge 1$ so that
$$|Df(x)|^n\le KJ(x,f)$$
holds almost everywhere, see e.g. page 177 in \cite{Ri}.
Next, every mapping $f$ with Sobolev regularity $W^{1,1}_{\rm loc}$ is
approximatively differentiable almost everywhere. That is, for almost every
$x_0$ and every $\epsilon>0,$ the set
$$A_{\epsilon}:=\{x:\ \frac{|f(x)-f(x_0)-Df(x_0)(x-x_0)|}{|x-x_0|}<\epsilon\}$$
has density one at $x_0,$ see e.g. page 140 in \cite{Z}.
Because of our a priori Sobolev regularity, it thus suffices to show the
above distortion inequality at every such point $x_0.$

For simplicity, we only give the proof in the planar case, assuming
differentiability instead of approximate differenentiability. The higher
dimensional setting and the switch to approximate differentiability only
require technical modifications that should be obvious to the reader after
examining the argument below. Thus suppose that $f$ is differentiable at
$x_0.$

{\em Case} (a). Suppose $f$ is differentiable at $x_0$ with
$J_f(x_0)\ne 0.$ In some coordinate systems we have
\[
Df(x_0)=\begin{bmatrix}
a & 0\\
0& b
\end{bmatrix}
\]
Assume $0<a<b$. We want to show that $b/a$ is bounded. Consider the function
\[ u(\omega)=\chi_{\{\omega=x'+iy': 0\le y'\le x'\}}(\omega-f(x_0)).\]
Then $\|u\|_{\rm qns}=8.$
Now
\begin{align*}
T^{-1}\big(\{\omega=x'+iy': 0\le y'\le x'\}\big)&
=\{z=x+iy: 0\le by\le ax\}\\&
=\{z=x+iy: 0\le y\le (a/b)x\}
\end{align*}
for the linear transformation $T$ associated to $Df(x_0).$
Now $u\circ f(x_0)=1.$ If $r>0$ is such that $B(x_0,r)\subset \Omega,$ we
conclude from the morphism property of $f$ that
\begin{align*}
1&\le \frac{C}{r^2}\int_{B(0,r)}u\circ f\,dm\\&=
\frac{C}{r^2}\frac 12r^2\arctan\frac ab+o(r)\\&
\to  \frac C2\frac ab,
\end{align*}
when $r\to 0,$ where $C>0$ comes from the morphism property.
Hence $b/a\le C/2$.
\par\medskip
{\it Case} (b).  Now suppose that $f$ is differentiable at $x_0$
with $J_f(x_0)=0$.
We want to prove that $Df(x_0)=0.$
We argue by contradiction: suppose $Df(x_0)\ne 0.$ We may assume
\[Df(x_0)=\begin{bmatrix}
0 & 0\\
0& 1
\end{bmatrix}.
\]
Define
\[ u(\omega)=\chi_{\{\omega=x'+iy': 0\le |y'|\le x'\}}(\omega-f(x_0)).\]
Then $\|u\|_{\rm qns}=4,$
and
\[ f^{-1}(t+it+f(x_0))=s_1(t)+is_2(t)+x_0.\]
where $\lim_{t\to 0}\dfrac{s_2(t)}{s_1(t)}=0.$
But then there is no $C>0$ such that
\[(u\circ f)(x_0)\le \frac C{r^2}\int_{B(x_0,r)}u\circ f\, dm\]
for all small $r>0,$
which contradicts the morphism property of $f.$

\section{Proof of Theorem \ref{new-pekka}}
It is an immediate consequence of Theorem \ref{km} that quasiconformality
of the homeomorphism  $f$ is a sufficient condition for $f$
to be a strong qns-morphism.

 In the other direction, it suffices to prove that $f^{-1}:\Omega'\mapsto \Omega$ is quasiconformal.
 Thus it suffices to verify the existence of $H<\infty$ such that
 \begin{equation}\label{razmak}
    \limsup_{r\to 0}\frac{{\rm diam}\,(f^{-1}(\overline B(y,r)))^n}{|f^{-1}(\overline B(y,r))|}\le H
 \end{equation}
 for all $y\in\Omega'$, see page 64 in \cite{HK}.

To simplify our notation,
we write $x'=f(x)$ for $x\in \Omega,$ in what follows.

 Fix $y'\in\Omega'$ and let $r>0.$
Towards proving \eqref{razmak}, we may assume that $r$ is so small that
 \[B(y,2\,{\rm diam}\,(f^{-1}(\overline B(y',2r))))\subset\Omega.\]
 Fix $y_0'\in \partial B(y',2r)$ and pick $y_1'\in\partial B(y',r)$ so that
 $$|y_0'-y_1'|=\max_{\omega'\in\partial B(y',r)}|\omega-y_0'|.$$
 Set $G'=\Omega'\setminus\{y_0'\}$ and $G=\Omega\setminus \{y_0\}.$
 Now $B(y_1,|y_1-y_0|/2)\subset G$ and
 \begin{equation}\label{blesav}
    {\rm diam\,}(f^{-1}(\overline B(y',r))\le 2|y_1-y_0|.
 \end{equation}
 Define $u(\omega')=\chi_{\overline B(y',r)}(\omega')$ for $\omega'\in G'.$
 Then $u$ is qns in $G'$ with $\|u\|_{\rm qns}\le 3^n.$
 Since $f$ is is $C$-qns-morphism in $G$, we conclude that
 \[\begin{aligned}&u\circ f(y_1)\le C3^n\frac1{| B(y_1,|y_1-y_0|/2)|}\int_{B(y_1,|y_1-y_0|/2)|}u\circ f\,dm\\[1ex]&
 =C3^n\frac{|f^{-1}(\overline B(y',r))\cap  B(y_1,|y_1-y_0|/2)|}{|B(y_1,|y_1-y_0|/2)|}\end{aligned}.\]
 Recalling \eqref{blesav} and that $u\circ f(y_1)=u(y_1')=1,$ we arrive at
 \[{\rm diam\,}(f^{-1}(\overline B(y',r))^n\le CC_n|(f^{-1}(\overline B(y',r))|\]
 as desired.
\section{Proof of Theorem \ref{ro}}

\subsection{Proof of Theorem \ref{ro}} Let $x\in \Omega$ and
$0<r<\frac 12d(x,\partial \Omega).$
Since the mapping $f$ is regularly oscillating and quasiregular we have,
by Lemma \ref{vesna},
\begin{align*}
{\rm Lip}\, f(x)&\le Cr^{-1}\sup_{y\in\overline B(x,r)}|f(y)-f(x)|\\&
\le \frac C\delta r^{-1} d(f(x),\partial f(B(x,r))).
\end{align*}

Recall that non-constant quasiregular mappings are open.
Since $u$ is regularly oscillating and $d(f(x),\partial f(B(x,r)))>0,$ we
have that
\begin{align*}
{\rm Lip}\,u( f(x))&\le \hat Cd(f(x),\partial f(B(x,r)))^{-1}\sup_{z\in B(f(x),d(f(x),\partial f(B(x,r)))}
|u(f(x))-z|\\&\le
\hat Cd(f(x),\partial f(B(x,r)))^{-1}\sup_{y\in B(x,r)}|u\circ f(y)-u\circ f(x)|.
\end{align*}
Now we have
\begin{align*}
{\rm Lip}\,(u\circ f)(x)&\le {\rm Lip}\,(u( f(x))\,{\rm Lip}\,f(x)\\&
\le \hat Cd(f(x),\partial f(B(x,r)))^{-1}\sup_{y\in B(x,r)}|u\circ f(y)-u\circ f(x)|\\&\quad\times \frac C\delta r^{-1} d(f(x),\partial f(B(x,r)))\\&
=C'r^{-1}\sup_{y\in B(x,r)}|u\circ f(y)-u\circ f(x)|.
\end{align*}
This completes the proof of the theorem.


\bibliographystyle{amsplain}

\end{document}